\documentclass[12pt]{amsart}

\usepackage{amsfonts, amstext, amsmath, amsthm, amscd, amssymb, graphicx, epstopdf}
\usepackage{epsfig, graphics, psfrag, enumerate}
\usepackage{color}
\usepackage{verbatim}
%%  If the following line is uncommented, we see the labels of theorems, 
%% figures, etc. in the margins.
%\usepackage[notref,notcite]{showkeys}
%\setlength{\marginparwidth}{1.00in}
\let\oldmarginpar\marginpar
\renewcommand\marginpar[1]{\oldmarginpar[\raggedleft\footnotesize #1]%
{\raggedright\footnotesize #1}}

 \textwidth 6.3in
\textheight 8.5in 
\oddsidemargin 0.25in
 \evensidemargin 0.25in

\newtheorem{theorem}{Theorem}[section]

\newtheorem{lemma}[theorem]{Lemma}
\newtheorem{corollary}[theorem]{Corollary}

\newtheorem{definition}[theorem]{Definition}

\theoremstyle{definition}

\newtheorem{example}[theorem]{Example}
\newcommand{\vtet}{{v_{\rm tet}}}
\newcommand{\voct}{{v_{\rm oct}}}

\newcommand{\ZZ}{{\mathbb{Z}}}

\newcommand{\guts}{{\rm guts}}
\newcommand{\cut}{{\backslash \backslash}}

\newcommand{\vol}{{\rm vol}}

\newcommand{\abs}[1]{{\left\vert #1 \right\vert}}

\newcommand{\GA}{{\mathbb{G}_A}}
\newcommand{\GB}{{\mathbb{G}_B}}
\newcommand{\G}{{\mathbb{G}}}
\newcommand{\GRA}{{\mathbb{G}'_A}}
\newcommand{\GRB}{{\mathbb{G}'_B}}

\newcommand\no[1]{}

\newtheorem*{namedtheorem}{\theoremname}
\newcommand{\theoremname}{testing}

\def\be { \begin{equation} }
\def\ee { \end{equation} }

\begin{document}

\title{State Surfaces of Links}

\author{Efstratia Kalfagianni}
\address[]{Department of Mathematics, Michigan State University, East
Lansing, MI, 48824, USA}

\email[]{kalfagia@math.msu.edu}
\keywords{State surface, state graph, Jones polynomial, crosscap number, hyperbolic volume.}
\thanks{The author is supported in part by NSF grants DMS-1404754 and DMS-1708249.}

%\begin{document}
%\begin{abstract}State surfaces are spanning surfaces of links that  are obtained from link diagrams  guided by the combinatorics underlying  Kauffman's  construction of the Jones link polynomial
%via \emph{state models} \cite{states}. Geometric properties of such surfaces are often dictated  by simple  link diagrammatic criteria  and can be used to study geometric structures of link 
%complements. State surfaces also provide a tool for establishing relations between Jones polynomials and topological link invariants, such as the crosscap number or   invariants coming from geometric structures on link complements (e.g. hyperbolic volume).This is important as it sheds some light on the long standing open question about  the geometric link properties detected by Jones polynomials.
%In this article
%we survey the construction of state surfaces of links and some of their recent applications.
%\end{abstract}

\maketitle

\section{Introduction}\label{Sec:preface}

State surfaces are spanning surfaces of links that  are obtained from link diagrams. Their construction  is  guided by the combinatorics underlying  Kauffman's  construction of the Jones link polynomial
via \emph{state models}. Geometric properties of state surfaces are often dictated  by simple  link diagrammatic criteria, and the surfaces themselves carry important  information about geometric structures of link 
complements. On the other hand, certain state surfaces carry spines (state graphs) that can be used to compute the Jones polynomial of links. From this point of view,
state surfaces  provide a tool for establishing relations between Jones polynomials and topological link invariants, such as the crosscap number or   invariants coming from geometric structures on link complements (e.g. hyperbolic volume). In this article
we survey the construction of state surfaces of links and some of their recent applications.

%%%%%%%%%%%

\section{Definitions and examples}\label{Sec:Intro}

For a link $K$ in $S^3$,  $D = D(K)$ will denote a link diagram, in the equatorial
$2$--sphere of $S^3$. We  will often abuse by referring to the projection
$2$--sphere using the common term projection plane. In
particular, $D(K)$ cuts the projection ``plane'' into compact regions each of which is a polygon with vertices at the crossings of $D$.

Given a crossing on a  link diagram $D(K)$ there are two ways to resolve
it; the $A$- resolution and the $B$-resolution  as shown in Figure \ref{resolve}. The figure is borrowed from \cite{KaLee}.  Note that if the link $K$ is oriented,  only one of the two resolutions at each crossing
will respect the orientation of $K$. 
A Kauffman state $\sigma$ on $D(K)$ is a choice of one of these two resolutions at each crossing of $D(K)$ \cite{states}.
  For each state $\sigma$  of a  link diagram the  \emph {state graph} ${\G}_{\sigma}$  is constructed as follows: The result of applying $\sigma$
to $D(K)$
is a collection $v_{\sigma}(D)$ of non-intersecting
circles in the plane,  called \emph{state circles}, together with embedded arcs  recording the crossing
splice. 
Next we obtain the  \emph{state surface} $S_{\sigma}$, as follows: Each  circle of $v_{\sigma}(D)$ bounds a disk in $S^3$. This collection of disks can be disjointly embedded in the ball below the projection plane.
At each crossing of $D(K)$, we connect the pair of neighboring disks by a half-twisted band to construct a surface $S_{\sigma} \subset S^3$ whose boundary is $K$. 
\begin{figure}[ht]
\includegraphics[scale=.6]{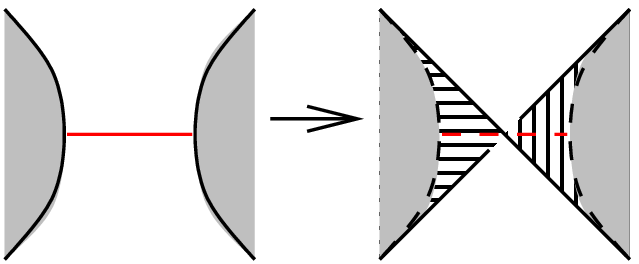}
\hspace{2cm}
\includegraphics[scale=.6]{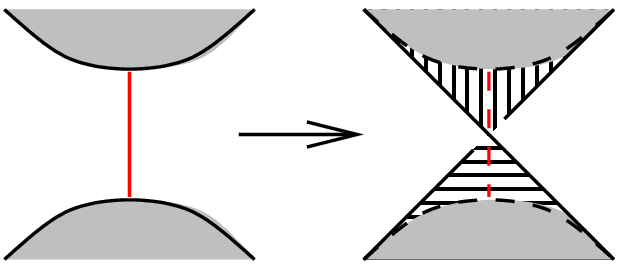}
\label{resolve}
\caption{The $A$-resolution (left), the $B$-resolution (right)  of a crossing and their contribution to state surfaces.
} 
\end{figure}

\begin{example} \label{AB}{\rm{Given an oriented link diagram $D=D(K)$, the Seifert state, denoted by $s(D)$,  is the one that assigns to each crossing of $D$ the resolution that is consistent with the orientation of $D$.
The corresponding state surface $S_s=S_s(D)$ is  oriented (a.k.a. a Seifert surface). The process of constructing $S_s$ is known  a {\emph{Seifert's algorithm}} \cite{Lickbook}.

By applying the
$A$--resolution to each crossing of $D$, we obtain a
crossing--free diagram $s_A(D)$.
Its state graph, denoted by
$\GA=\GA(D)$,  is called  the all--$A$ state  graph and the corresponding state surface is denoted by $S_A=S_A(D)$. An example is shown in  Figure  \ref{fig:statesurface},
 which is borrowed from \cite{GutsBook}. Similarly, for the all--$B$ state the crossing--free resulting diagram is denoted by
$s_B(D)$, the state
graph is denoted $\GB$, and the state surface by $S_B$.}}\end{example}

\begin{figure}
\includegraphics[scale=.8]{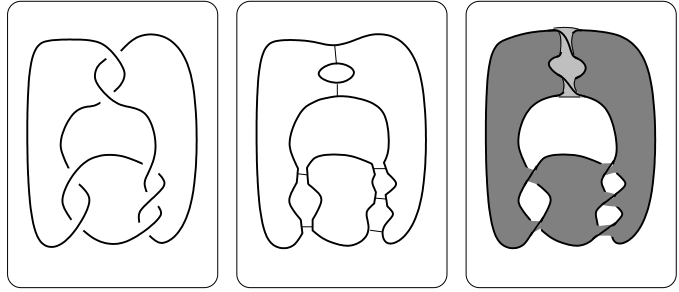}
\caption{Left to right:  A diagram, the all-$A$ state graph $\GA$ and the corresponding   state
	surface $S_A$.}
\label{fig:statesurface}
\end{figure}

By construction, $\mathbb{G}_\sigma$ has one vertex for every circle of
$v_\sigma$ (i.e. for every  disk in $S_\sigma$), and one edge for every
half--twisted band in $S_\sigma$. This gives a natural embedding of
$\mathbb{G}_\sigma $ into the surface, where vertices are  embedded
into the corresponding disks, and edges run through the
corresponding half-twisted bands. Hence,  $\mathbb{G}_\sigma$ is a spine for  $S_\sigma$.

\begin{lemma}\label{lemma:ga-spine}
The surface $S_\sigma$ is orientable if and only if
$\mathbb{G}_\sigma$ is a bipartite  graph.
\end{lemma}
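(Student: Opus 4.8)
The plan is to reduce orientability of $S_\sigma$ to a vertex $2$-colouring problem on $\mathbb{G}_\sigma$, exploiting the fact that each band in $S_\sigma$ carries exactly one half-twist. Recall that $S_\sigma$ is the union of the disks $D_1,\dots,D_n$ bounded by the state circles of $v_\sigma(D)$, sitting disjointly in a ball below the projection plane, together with one half-twisted band $B$ for each crossing of $D$, joining the (two, possibly coincident) disks adjacent to that crossing; in $\mathbb{G}_\sigma$ the $D_i$ are the vertices and the bands $B$ are the edges. Since $S_\sigma\subset S^3$, it is orientable if and only if it is co-orientable, i.e.\ if and only if one can choose a continuous unit normal vector field along $S_\sigma$.

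First I would record the local picture of a half-twisted band. A band $B$ joining $D_i$ to $D_{i'}$ is a square $[0,1]\times[0,1]$ attached to $D_i$ along $\{0\}\times[0,1]$ and to $D_{i'}$ along $\{1\}\times[0,1]$, embedded with a $180^\circ$ twist --- and this is so whether the crossing was $A$- or $B$-resolved, only the sign of the twist changing. Consequently, a choice of unit normal on $D_i$ extends across $B$ to a unit normal whose restriction to $D_{i'}$ points to the \emph{opposite} side; equivalently, $D_i\cup B$ is an annulus when the chosen sides ``match'' across $B$ and a M\"obius band otherwise. In particular a band joining $D_i$ to itself (a loop of $\mathbb{G}_\sigma$) always produces an embedded M\"obius band, so $S_\sigma$ is non-orientable exactly in that case, consistent with the fact that a graph with a loop is never bipartite.

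Now encode a co-orientation by colours. Given a global co-orientation of $S_\sigma$, restrict it to each disk and let $c(i)\in\{0,1\}$ record which side of $D_i$ the normal points to; by the local picture, $c(i)\neq c(i')$ whenever $D_i$ and $D_{i'}$ are joined by a band, so $c$ is a proper $2$-colouring of $\mathbb{G}_\sigma$ and the graph is bipartite. Conversely, given a proper $2$-colouring $c$ of $\mathbb{G}_\sigma$, co-orient each $D_i$ by the side prescribed by $c(i)$; since $c(i)\neq c(i')$ across every band, the local picture shows these co-orientations glue across each $B$ to a well-defined continuous unit normal on all of $S_\sigma$, so $S_\sigma$ is orientable. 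Because bipartiteness of $\mathbb{G}_\sigma$ and orientability of $S_\sigma$ are both properties that may be checked one connected component at a time, this proves the equivalence.

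The only point that needs genuine care is the local model of the band: one must verify that a $180^\circ$-twisted band really does reverse the transverse orientation, and that this is insensitive to whether the underlying crossing received the $A$- or the $B$-resolution (the two bands twist in opposite senses, but each is a single half-twist). Everything else --- the interchange of orientations and co-orientations for surfaces in $S^3$, the translation ``admits a proper $2$-colouring $\Leftrightarrow$ bipartite'', and the reduction to connected components --- is routine.
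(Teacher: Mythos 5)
Your proof is correct and follows essentially the same route as the paper's: both arguments rest on the fact that each half-twisted band reverses the transverse direction between adjacent disks, so that orientability (equivalently two-sidedness in $S^3$) amounts to a consistent choice of sides over the disks. The paper phrases this consistency via the even-cycle characterization of bipartite graphs and an inductive extension of the normal, while you phrase it as a proper $2$-colouring; these are the same argument in equivalent language.
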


\begin{proof}

Recall that a graph is bipartite if and only if all cycles (i.e. paths from any vertex to itself)
contain an even number of edges.

If $\mathbb{G}_\sigma$ is
bipartite, we may assign an  orientation on
$S_\sigma$, as follows: Pick a normal direction to one disk,
corresponding to a vertex of $\mathbb{G}_\sigma$, extend
over half--twisted bands to orient every adjacent disk, and continue
inductively. This inductive process
$S_\sigma$ will not run into a contradiction since
every cycle in $\mathbb{G}_\sigma$ has even number of edges. Thus $S_\sigma$ is
a two--sided surface in $S^3$, hence orientable. This is the case with the example of Figure \ref{fig:statesurface}.

Conversely, suppose $\mathbb{G}_\sigma$ is not
bipartite, hence contains a cycle with an odd number of edges. By embedding
$\mathbb{G}_\sigma$ as a spine of $S_\sigma$, as above, we see that this cycle is an
orientation--reversing loop in $S_\sigma$.\qed
\end{proof}

%%%%%%%%%%%%%%%%%%%%%%%%%%%%%%%%%%%%%%%%%%%%%%%%%%%%%%%%%%%%%%%%%
\section{Genus and crosscap number of alternating links }\label{Sec:genus}
The genus of an orientable surface  $S$ with with $k$ boundary components is defined to be $1-(\chi(S)+k)/2$,
where $\chi(S)$ is the Euler characteristic of $S$ and the  \emph{crosscap number} of a non-orientable surface with $k$ boundary components is defined to be
$2-\chi(S)-k$.

\begin{definition}Every link in $S^3$ bounds both orientable and non-orientable surfaces. 
The  \emph{genus} of an oriented  link $K$, denoted by $g(K)$, is  the minimum genus  over all orientable surfaces  $S$ 
bounded by $K$. That is we have $\partial S=K$.
The \emph{crosscap number}  (a.k.a. non-orientable genus) of a link $K$, denoted by $C(K)$,  is the minimum crosscap number over all non-orientable surfaces 
spanned by $K$.
\end{definition}

For \emph{ alternating links}   the genus and the crosscap number
can be  computed using  state surfaces of alternating link diagrams. For the orientable case, 
we recall the following classical result due to Crowell \cite{crowell} (see also \cite{Lickbook}).
\begin{theorem} \cite{crowell}  Suppose that $D$ is a connected alternating diagram of a $k$-component link $K$. Then the state surface $S_s(D)$ corresponding to the Seifert state of $D$ realizes the 
genus of $K$. That is we have $g(K)=1-{(\chi(S_s(D))+k)} /2$.
\end{theorem}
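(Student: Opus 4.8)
The plan is to establish the two inequalities $g(K)\le g(S_s(D))$ and $g(K)\ge g(S_s(D))$, where for a connected oriented surface $S$ with $k$ boundary components I write $g(S)=1-(\chi(S)+k)/2$. The first is immediate: $S_s(D)$ is a connected orientable surface spanned by $K$ with $k$ boundary components (orientable because the Seifert state surface is oriented, cf.\ Lemma~\ref{lemma:ga-spine}), so $g(K)\le g(S_s(D))$ by definition of the link genus. Hence all the content is in the reverse inequality: no orientable surface spanned by $K$ has smaller genus than $S_s(D)$. I would first reduce to the case that $D$ is reduced, which changes neither $K$ nor the quantity $\chi(S_s(D))=s-c$ (here $s$, $c$ denote the numbers of Seifert circles and crossings of $D$), since removing a nugatory crossing drops $s$ and $c$ each by one.

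Next I would recall the classical genus bound coming from the Alexander polynomial (see \cite{Lickbook}). For any connected Seifert surface $S$ of a $k$-component link $K$ one has $\beta_1(S)=2g(S)+k-1$, and a choice of basis for $H_1(S;\mathbb{Z})$ produces a Seifert matrix $V$ of that size with $\Delta_K(t)\doteq\det(V-tV^T)$ up to a unit $\pm t^j$. Since $\det(V-tV^T)$ is a polynomial in $t$ of degree at most $\beta_1(S)$, one gets
\[
\operatorname{breadth}\Delta_K(t)\le\beta_1(S)=2g(S)+k-1,
\]
and minimizing over $S$ gives $g(K)\ge\tfrac12\bigl(\operatorname{breadth}\Delta_K(t)-k+1\bigr)$. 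So it suffices to prove that $S_s(D)$ makes this an equality, i.e.\ that $\operatorname{breadth}\Delta_K(t)=\beta_1(S_s(D))=c-s+1$.

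The key step — and the main obstacle — is exactly this: showing that for the Seifert matrix $V$ coming from $S_s(D)$ the polynomial $\det(V-tV^T)$ genuinely has degree $\beta_1(S_s(D))$. Since $S_s(D)$ deformation retracts onto the Seifert graph $G_s$ of $D$ (vertices $\leftrightarrow$ Seifert circles, edges $\leftrightarrow$ crossings), a basis of $H_1$ is given by the cycles determined by the non-tree edges of a spanning tree of $G_s$; with respect to such a basis the coefficient of the top power of $t$ in $\det(V-tV^T)$ is $\pm\det V$, so the claim becomes $\det V\ne 0$. This is where ``alternating'' is used: in a reduced alternating diagram all crossings joining a fixed pair of Seifert circles have the same sign, and after collapsing each family of parallel edges of $G_s$ to one weighted edge one can identify $\det V$, up to sign, with a weighted spanning-tree count of the resulting reduced graph — hence a positive integer. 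Showing that the alternating condition prevents any sign cancellation in this count is the heart of the matter; this is Crowell's theorem that the Alexander polynomial of a non-split alternating link is \emph{alternating}, with nonvanishing extreme coefficients. An alternative route that avoids the Seifert-matrix bookkeeping is to use that an alternating (more generally homogeneous) diagram is built by Murasugi summation: $S_s(D)$ is an iterated Murasugi sum of Seifert surfaces of sub-diagrams each having all crossings of one sign, which are of minimal genus (e.g.\ by the Bennequin inequality for positive diagrams), and to invoke Gabai's theorem that a Murasugi sum of minimal-genus surfaces is of minimal genus.

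Finally, granting $\operatorname{breadth}\Delta_K(t)=\beta_1(S_s(D))=2g(S_s(D))+k-1$ and combining with the displayed bound gives $g(K)\ge g(S_s(D))$; together with $g(K)\le g(S_s(D))$ this forces $g(K)=g(S_s(D))=1-(\chi(S_s(D))+k)/2=(c-s-k+2)/2$, as required.
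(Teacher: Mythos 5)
The paper itself offers no proof of this statement; it is quoted from \cite{crowell}, so your attempt has to be measured against Crowell's original argument, and your primary route is in fact exactly his strategy: the trivial inequality $g(K)\le 1-(\chi(S_s(D))+k)/2$, the lower bound $2g(K)+k-1\ge \mathrm{breadth}\,\Delta_K(t)$ coming from a Seifert matrix $V$ of any spanning surface, and the claim that for a reduced alternating diagram the extreme coefficient $\pm\det V$ of $\det(V-tV^{T})$ is nonzero. The genuine gap is precisely at the step you yourself call ``the heart of the matter'': invoking ``Crowell's theorem that the Alexander polynomial of a non-split alternating link is alternating with nonvanishing extreme coefficients'' is invoking the content of the very paper being cited, so route one as written is a correct reduction, not a proof. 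To close it you must actually do the combinatorics: identify $\det(V-tV^{T})$ (or its extreme coefficient) with a signed spanning-tree/state count of the weighted Seifert graph and then show that alternation forces all terms to carry the same sign. Your parenthetical remark that parallel edges of the Seifert graph have equal sign is itself a consequence of the nontrivial fact that alternating diagrams are homogeneous (sign-constant on each block of the Seifert graph), and even granting it, ``hence a positive integer'' does not follow without the sign-coherence argument; nothing in the sketch supplies it.

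Your alternative route is genuinely different and does give a complete proof modulo standard citations: alternating diagrams are homogeneous, $S_s(D)$ is an iterated Murasugi sum (along disks in the projection sphere determined by the nesting of Seifert circles) of the Seifert surfaces of special alternating blocks, each of those is minimal genus (e.g.\ via Bennequin's inequality for positive diagrams, applied to a block or its mirror), and Gabai's theorem that a Murasugi sum of minimal-genus surfaces is minimal genus finishes it. What this buys is a stronger statement (it proves the homogeneous case, with no Seifert-matrix bookkeeping); what it costs is sutured-manifold machinery far heavier than Crowell's elementary count, plus the unverified setup steps (alternating $\Rightarrow$ homogeneous, and the identification of the plumbing regions). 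The small reductions you make are fine: untwisting a nugatory crossing drops both $s$ and $c$ by one, so passing to a reduced diagram changes neither $K$ nor $\chi(S_s(D))$, and for a $k$-component link the correct bound is indeed $\mathrm{breadth}\,\Delta_K(t)\le\beta_1(S)=2g(S)+k-1$. So: right skeleton, valid second route, but the first route as presented begs the question at its central step.
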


In  \cite{Adamsstate}, Adams and Kindred used state surfaces  to give an  algorithm for computing crosscap numbers of alternating links.
To summarize  their  algorithm and state their result, consider a connected alternating diagram $D(K)$ as 4-valent a graph on $S^2$. 
Each region in the complement of the graph is an $m$-gon 
with vertices at the vertices of the graph.

\begin{lemma} \label{triangle} Suppose that $D(K)$ is a connected alternating link diagram whose complement has no
 bigons or 1-gons. Then  at least one region must be a triangle.
\end{lemma}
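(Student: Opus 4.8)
The plan is a short Euler characteristic count on the $4$-valent plane graph $G = D(K)$, for which the alternating hypothesis itself plays no role — it is only the ambient setting of the Adams--Kindred algorithm. Write $V$, $E$, $F$ for the numbers of vertices (crossings), edges, and complementary regions of $G$ on $S^2$; we may assume $D$ has at least one crossing, so $V \ge 1$ (otherwise there are no polygonal regions at all). First I would record the two structural facts. Since $G$ is $4$-valent, summing vertex degrees gives $2E = 4V$, i.e. $E = 2V$. Since $D$ is connected, $G$ is a connected graph embedded in $S^2$ with all complementary regions open disks, so Euler's formula $V - E + F = 2$ applies; combined with $E = 2V$ this yields $F = V + 2$.

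Next I would double count edge--region incidences. Each region is an $m$-gon for some $m \ge 1$, and summing $m$ over all regions gives exactly $2E$, since every edge of $G$ lies on the boundary of two regions (counted with multiplicity if necessary). By hypothesis $D$ has no $1$-gon and no bigon regions, so in fact every region is an $m$-gon with $m \ge 3$. Suppose, toward a contradiction, that no region is a triangle; then every region has $m \ge 4$, and
\[
4V \;=\; 2E \;=\; \sum_{\text{regions } f} m_f \;\ge\; 4F \;=\; 4(V+2) \;=\; 4V + 8,
\]
which is absurd. Hence at least one region must be a triangle.

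The only points deserving explicit care — and the only places where anything could go wrong — are the bookkeeping behind Euler's formula and the incidence count: that connectedness of $D$ genuinely produces a connected graph on $S^2$ whose complementary regions are disks (so $V - E + F = 2$ holds on the nose), that every crossing contributes a degree-$4$ vertex (so $E = 2V$), and that the phrase ``each region is an $m$-gon'' together with the no-bigon/no-$1$-gon hypothesis legitimately forces $m \ge 3$ region by region. I do not anticipate a genuine obstacle here; the argument is the classical ``some face has fewer than four sides'' estimate for plane graphs, specialized to the $4$-valent case, where the average region size is exactly $2E/F = 4V/(V+2) < 4$, so some region has strictly fewer than four sides, hence a triangle.
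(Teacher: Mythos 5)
Your proof is correct and follows essentially the same route as the paper's: Euler's formula $V-E+F=2$ together with the $4$-valence relation $E=2V$, then a double count showing that if every region had at least four sides the number of regions would be too small. The only cosmetic difference is that you count edge--region incidences ($\sum m_f = 2E$) while the paper counts vertex--region incidences (each region has at least four vertices, each vertex lies on at most four regions); for a $4$-valent diagram these are the same estimate, and your write-up is if anything a bit cleaner about where the contradiction with $F=V+2$ comes from.
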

 \begin{proof}
Let $V$, $E$, $F$ 
denote the number of vertices, edges and complimentary regions of $D(K)$, respectively.
 Then,
$V - E + F = 2$ and
 $E = 2V$, which implies that $F > V$. Suppose that none of the $F$ regions is a triangle.
 Then,   $F < 4V / 4 = V$ since each region  has  at least four  vertices and each vertex can only be on at most 4 distinct regions. This is a contradiction. \qed \end{proof}

 Observe that the Euler characteristic of a surface,
corresponding to a state $\sigma$, is      $\chi(S_{\sigma})=v_{\sigma}-c$, where $c$ is the number of crossings on $D(K)$.
Thus to maximize $\chi(S_{\sigma})$ we must maximize the number of state circles $v_{\sigma}$.
Now we outline the algorithm from  \cite{Adamsstate} that finds a surface of maximal Euler characteristic (and thus of minimum genus) over all surfaces (orientable and non-orientable)  spanned by an alternating link.
\vskip 0.08in

\noindent { {\bf{Adams-Kindred algorithm:}}} \ Let $D(K)$ be a connected, alternating diagram.
\begin{enumerate}
\item Find the smallest $m$ for which the complement of the projection $D(K)$ contains an $m$-gon. 

\item If $m=1$, then we resolve the corresponding crossing so that the $1$-gon becomes a state circle.

Suppose that $m=2$. Then  some regions of  $D(K)$  are bigons.
Create one branch of the algorithm for each bigon on $D(K)$.
Resolve the two crossings corresponding to the vertices of the bigon
so that the bigon is bounded by a state circle. See Figures 1.4 and \ref{fstep2}  below. \label{step2a}

\vskip 0.02in

\item Suppose $m>2$. Then  by Lemma  \ref{triangle}, we have $m = 3$. Pick a triangle region on $D(K)$.
Now the process has two branches: For one branch 
we resolve each crossing on the triangle's boundary so that the triangle becomes a state  circle. For the other branch, we resolve each of the crossings the opposite way.

\begin{figure}[ht]
\centering
\includegraphics[scale=.15]{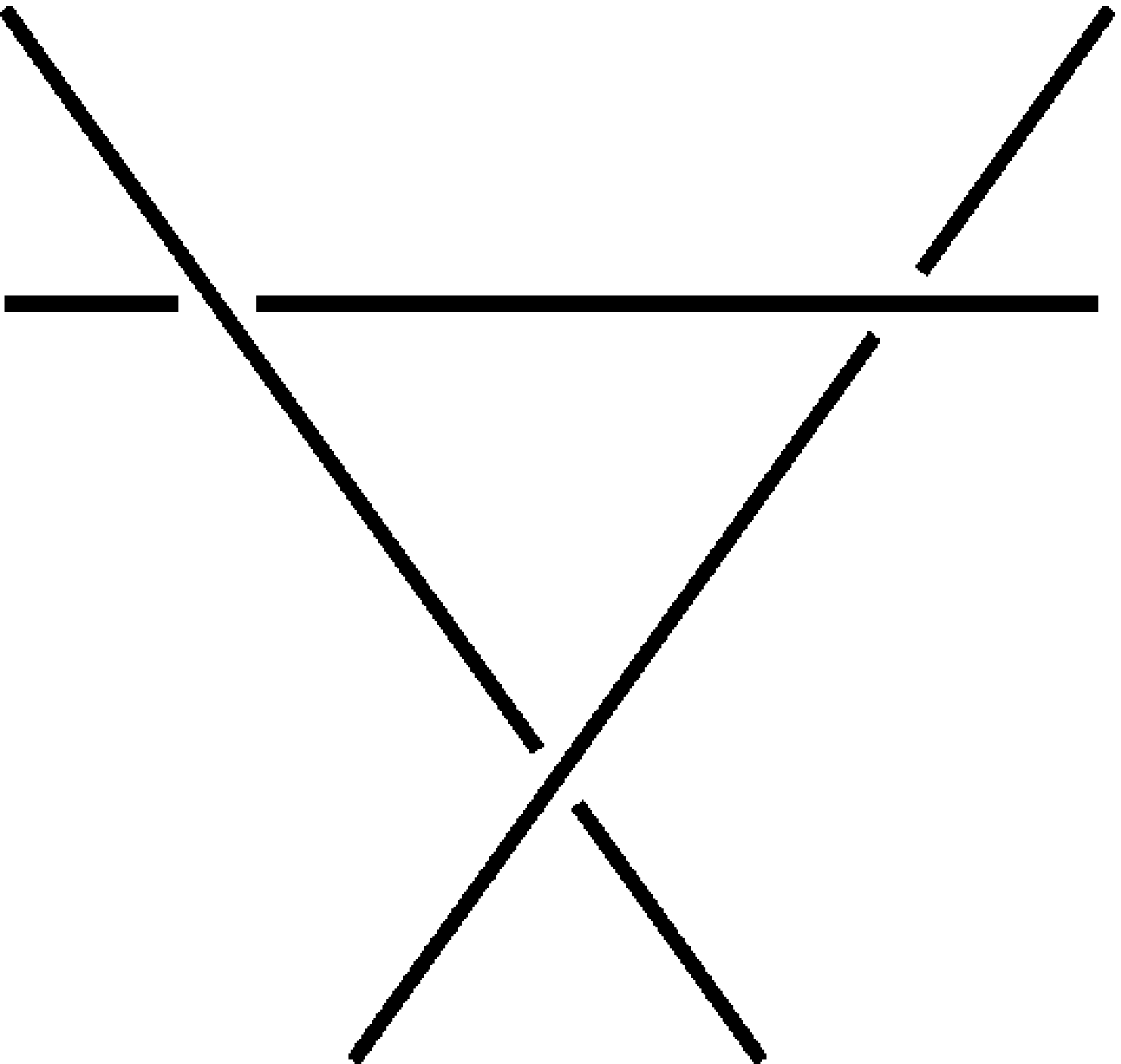}
\hspace{1cm}
\includegraphics[scale=.15]{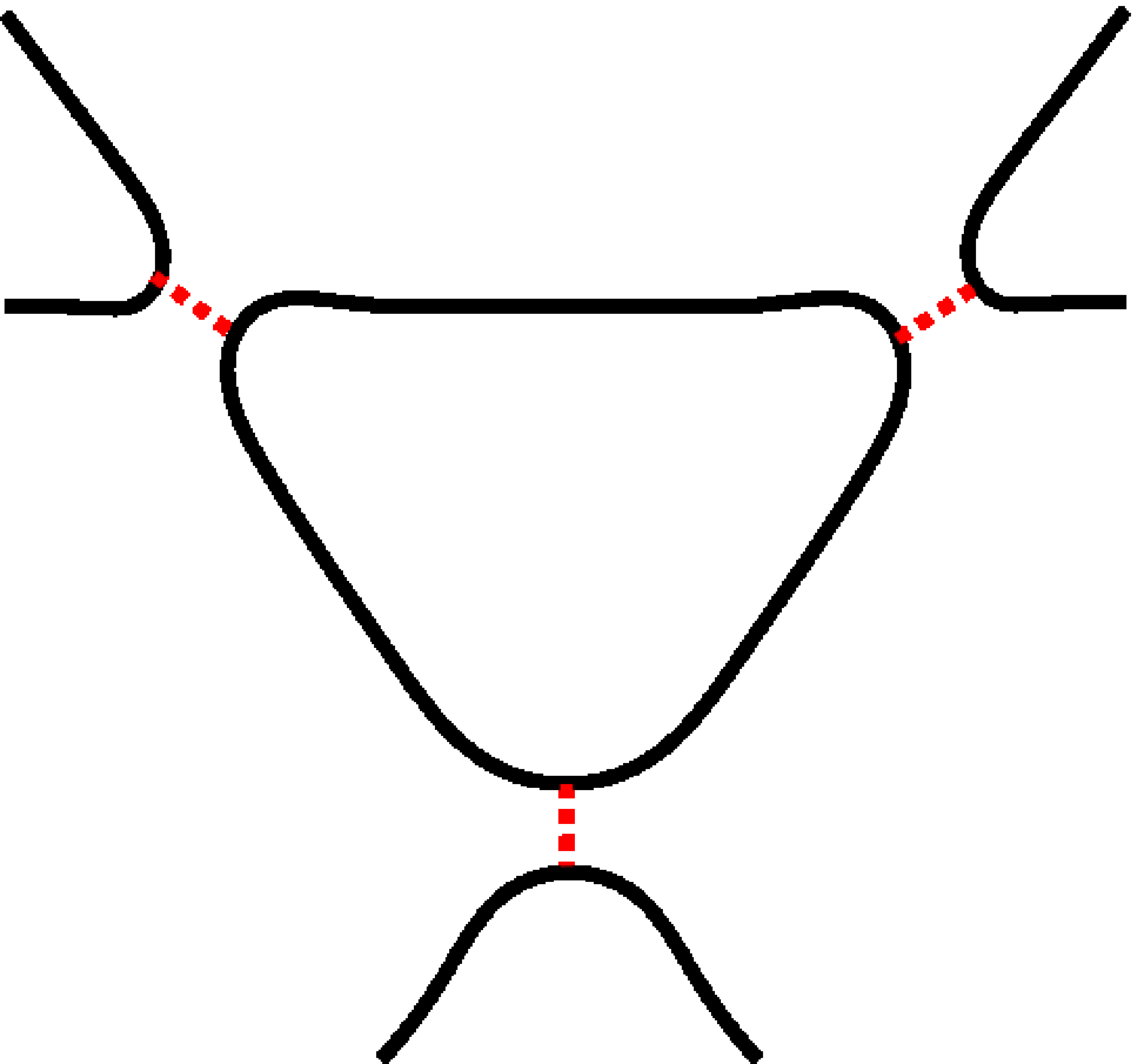}
\hspace{1cm}
\includegraphics[scale=.15]{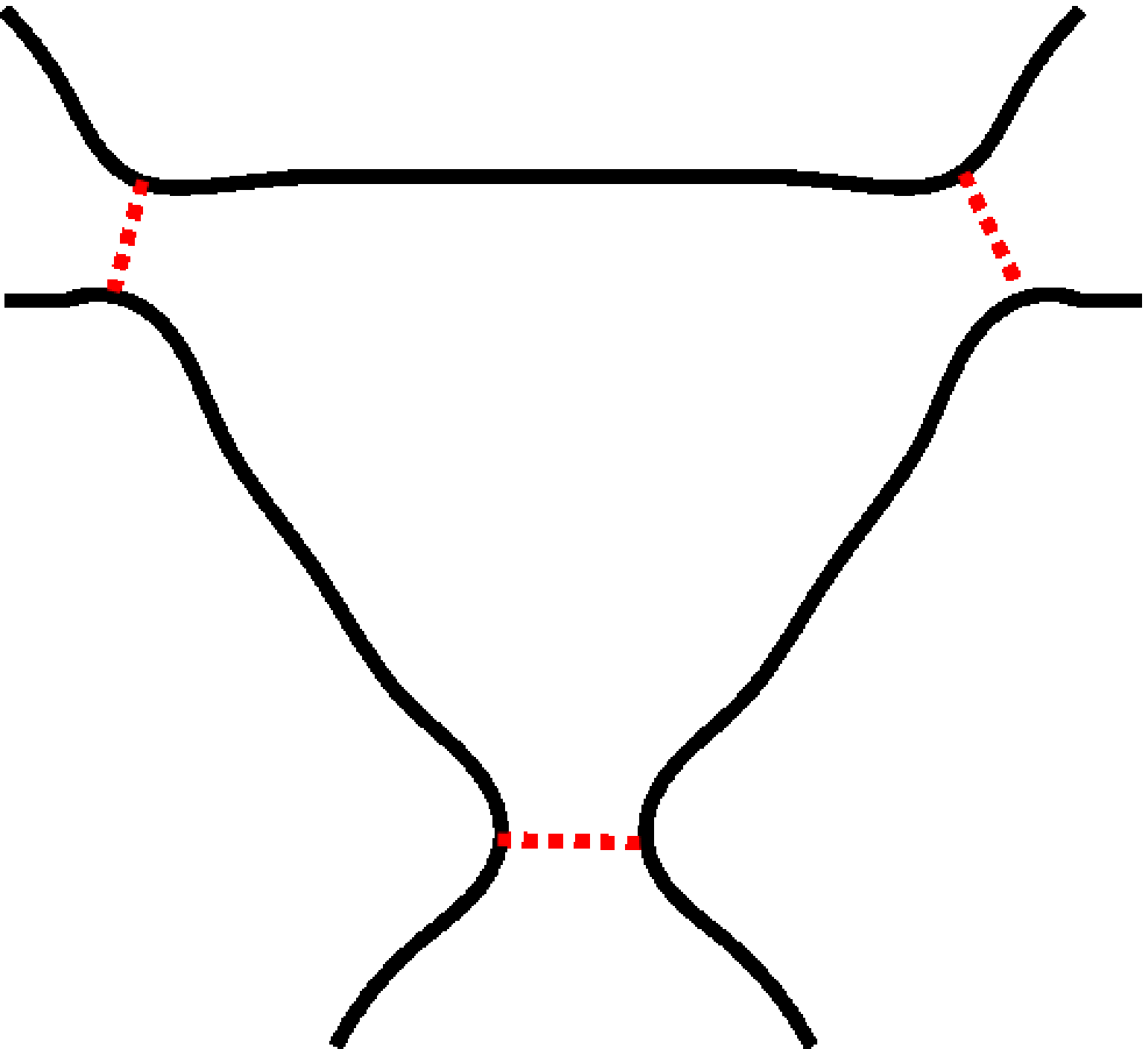}
\caption{The two branch of the algorithm for triangle regions. The  is figure borrowed from
\cite{KaLee}.
}
\label{3-gon}
\end{figure}

\item Repeat  Steps 1  and 2 until each branch reaches a projection without crossings. Each branch corresponds 
to a Kauffman state of $D(K)$  for which there is a corresponding state surface.
Of all the branches 
involved in the process choose  one that has the largest number of state circles. The surface $S$ corresponding to this 
state has maximal Euler characteristic over all the states corresponding to $D(K)$.
Note that, {\emph{a priori}}, more than one branches of the algorithm may lead to surfaces of maximal Euler characteristic. 

\end{enumerate}

\begin{theorem}{\cite{Adamsstate}}\label{AK}  
Let $S$ be any maximal Euler characteristic  surface obtained via above algorithm  from an alternating diagram of $k$-component link $K$.
Then, 
\begin{enumerate}
\item If there is a surface $S$ as above that is non-orientable then $C(K)=2-\chi(S)-k$.
\vspace{0.1in}
\item  If all the surfaces $S$ as above are orientable, we have $C(K)=3-\chi(S)-k$.
Furthermore, $S$ is a minimal genus Seifert surface of $K$ and  $C(K)=2g(K)+1$.
\end{enumerate}
\end{theorem}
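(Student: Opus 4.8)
The plan is to analyze the surface $S$ produced by the Adams--Kindred algorithm and compare its Euler characteristic to that of an arbitrary spanning surface. First I would recall, from the discussion preceding the statement, that for any state $\sigma$ of $D(K)$ one has $\chi(S_\sigma) = v_\sigma - c$, and that the algorithm is engineered so that the chosen branch maximizes $v_\sigma$; by the result of Adams--Kindred this maximum of $\chi(S_\sigma)$ over states of $D$ actually equals the maximal Euler characteristic over \emph{all} spanning surfaces of $K$ (orientable or not). I would take this extremal property as the input fact. The crosscap number is, by definition, $2 - \chi(F) - k$ minimized over non-orientable spanning surfaces $F$, so the whole argument reduces to controlling whether the extremal surface can be taken non-orientable, and, if not, by how much the Euler characteristic must drop when one insists on non-orientability.

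For part (1): if some branch yields a non-orientable $S$ of maximal Euler characteristic $\chi_{\max}$, then $S$ itself is a competitor in the minimization defining $C(K)$, giving $C(K) \le 2 - \chi_{\max} - k$. Conversely, any non-orientable spanning surface $F$ has $\chi(F) \le \chi_{\max}$, hence $2 - \chi(F) - k \ge 2 - \chi_{\max} - k$, so $C(K) = 2 - \chi(S) - k$. This direction is essentially bookkeeping once the extremality of $S$ is granted.

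For part (2): now suppose every algorithm-output surface of maximal Euler characteristic is orientable. The key step is to show that one can modify such an $S$ — by adding a single half-twisted band, or equivalently by doing one crossing resolution the ``wrong'' (non-orientation-respecting) way at an appropriately chosen crossing — to obtain a non-orientable spanning surface $S'$ with $\chi(S') = \chi(S) - 1$ and the same number of boundary components $k$. Concretely, since $S$ is orientable its state graph $\mathbb{G}_\sigma$ is bipartite by Lemma~\ref{lemma:ga-spine}; toggling one resolution introduces an edge creating an odd cycle, which by Lemma~\ref{lemma:ga-spine} forces non-orientability, while the effect on $v_\sigma$ is to decrease it by exactly one (the two state circles meeting at that crossing are merged), so $\chi$ drops by one and — one must check — the boundary count is unchanged. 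Then $S'$ shows $C(K) \le 2 - \chi(S') - k = 3 - \chi(S) - k$. For the reverse inequality, any non-orientable spanning surface $F$ satisfies $\chi(F) \le \chi(S)$; but $\chi(F) = \chi(S)$ is impossible, for such an $F$ would be a maximal Euler characteristic spanning surface, and one must argue it would force the existence of a non-orientable algorithm output, contradicting the hypothesis of case~(2). Hence $\chi(F) \le \chi(S) - 1$, giving $C(K) \ge 3 - \chi(S) - k$ and the claimed equality. Finally, $S$ being an orientable spanning surface of maximal $\chi$ among \emph{all} spanning surfaces is in particular a minimal genus Seifert surface, so $2g(K) = 1 - \chi(S) - k + 1 = 2 - \chi(S) - k$, whence $C(K) = 2 - \chi(S) - k + 1 = 2g(K) + 1$.

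The main obstacle is the boundary-count bookkeeping in the modification step of part~(2): one must verify that toggling a single crossing resolution on a surface whose boundary is the $k$-component link $K$ again produces a surface with boundary exactly $K$ (not, say, $K$ with an extra component or with components recombined), and simultaneously that $v_\sigma$ drops by precisely one rather than increasing. Equivalently one must locate a crossing of $D$ whose two state circles (in the state realizing $S$) are \emph{distinct} — so that toggling merges them and lowers the circle count — while checking the resulting band attachment does not alter $\partial S$; the parity/odd-cycle consequence then follows cleanly from Lemma~\ref{lemma:ga-spine}. Showing such a crossing always exists (for a connected diagram with at least one crossing) and that the hypothesis of case~(2) genuinely precludes a non-orientable extremal surface are the two points I expect to require the most care.
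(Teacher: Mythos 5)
First, note that the paper itself does not prove Theorem \ref{AK}; it only quotes it from \cite{Adamsstate}, so your attempt has to be measured against the Adams--Kindred argument. The decisive gap is that you take as an ``input fact'' exactly the hard content of that argument: that the maximum of $\chi(S_\sigma)=v_\sigma-c$ over the algorithm's branches equals the maximal Euler characteristic over \emph{all} spanning surfaces of $K$, orientable or not. Nothing in your proposal establishes this; it is where the alternating hypothesis is genuinely used, via an isotopy of an arbitrary spanning surface into a layered position with respect to the alternating projection and an induction on crossings. Without that step your comparison only pits $S$ against other state surfaces of the given diagram $D$, which yields no lower bound on $C(K)$ at all. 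Granting the extremality statement, part (1) and the upper bound in part (2) are indeed routine, as you say; and your flagged ``main obstacle'' about boundary components is vacuous, since every Kauffman state surface of $D$ has boundary $K$ by construction. (Alternatively, one can simply attach a small half-twisted band to $S$ along $K$ --- the standard proof that $C(K)\le 2g(K)+1$ --- and avoid the toggling analysis; if one does toggle, the existence of a crossing joining two \emph{distinct} state circles follows from maximality of $v_\sigma$ over all states, since toggling a crossing whose two arcs lie on one circle would increase $v_\sigma$, and the loop edge created in the new state graph gives non-orientability by Lemma \ref{lemma:ga-spine}.)

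Second, even granting the extremality input, your lower bound in part (2) has a hole you acknowledge but do not fill: you must show that when every maximal-$\chi$ algorithm surface is orientable there is \emph{no} non-orientable spanning surface $F$ with $\chi(F)=\chi(S)$. The bare statement ``the algorithm realizes the maximal $\chi$ over all spanning surfaces'' does not imply this; what is needed is the orientability-refined version proved in \cite{Adamsstate}, namely that a spanning surface realizing $\chi_{\max}$ can be traded for an algorithm output of the same Euler characteristic \emph{and the same orientability type}, so that a non-orientable surface of maximal $\chi$ forces some branch of the algorithm to produce a non-orientable surface of that $\chi$. Your sentence ``one must argue it would force the existence of a non-orientable algorithm output'' is precisely the missing argument, and it is the second place where the actual geometric work of Adams--Kindred enters. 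In short, the bookkeeping in your proposal is correct, but both substantive steps of the theorem are deferred to the very result being proved.
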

\smallskip

\begin{example}\label{Fig8} {\rm{Different choices of branches as well as the order in resolving bigon regions following the algorithm above, may result in different state surfaces. In particular at the end of the algorithm  we may
have  both orientable and non-orientable surfaces that share the same Euler characteristic:

\begin{figure}[ht]
\centering
\includegraphics[scale=.3]{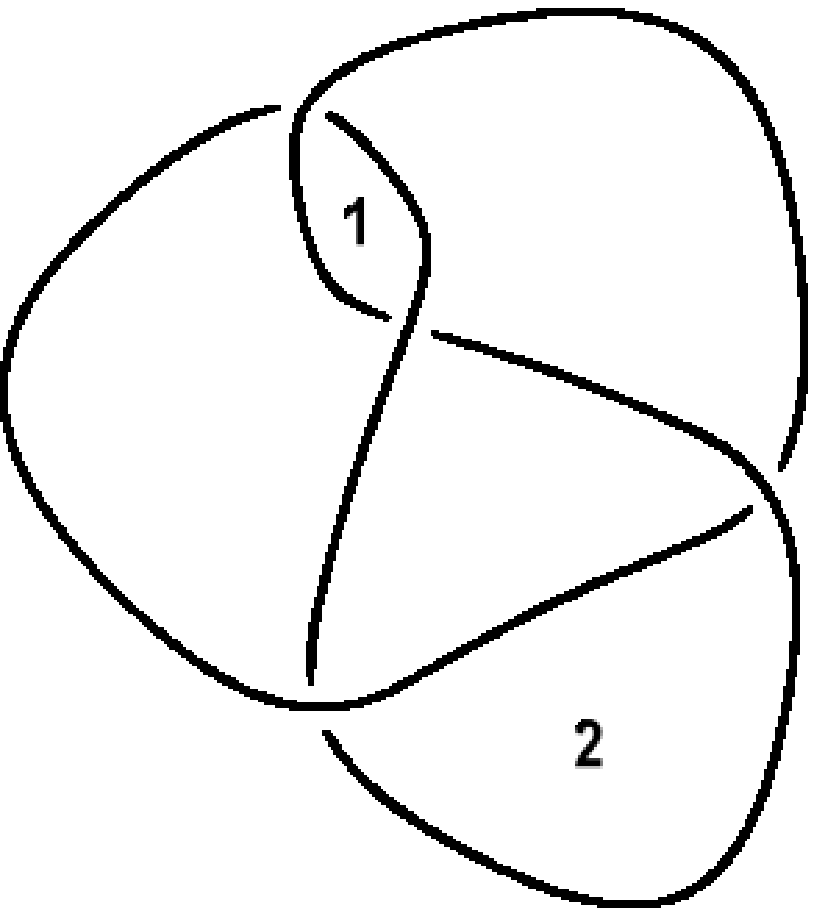} 
\hspace{2cm}
\includegraphics[scale=.3]{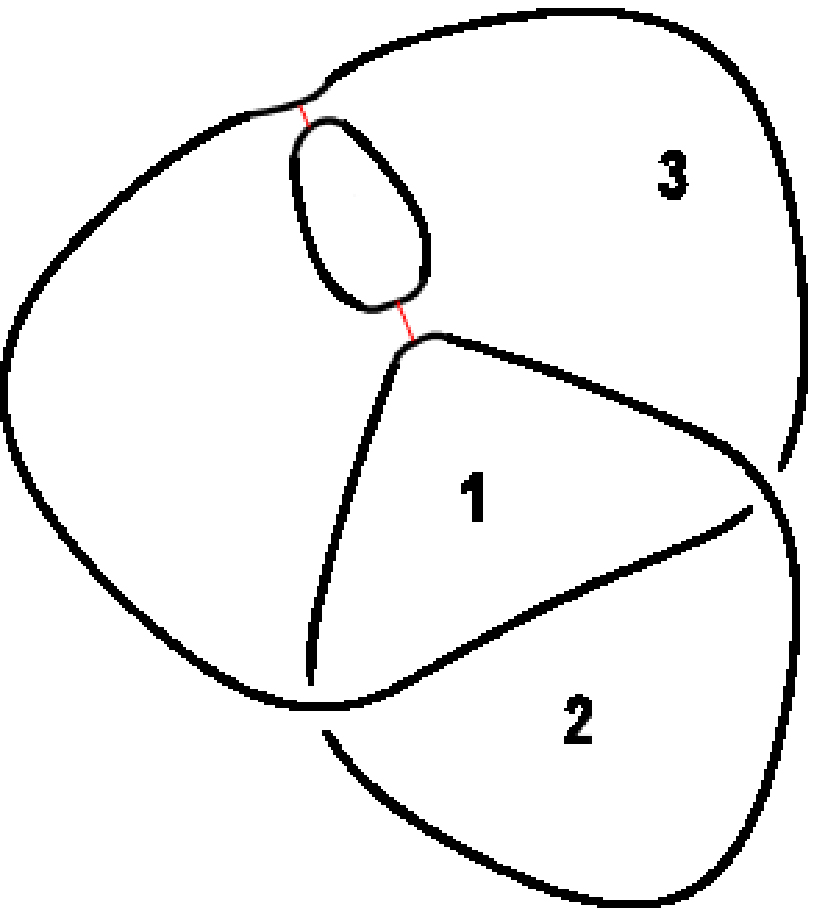}
\label{fstep0}
\caption{A diagram of   $4_1$ with bigon regions 1 and 2 and the result of applying step \ref{step2a} of the algorithm  to  bigon 1.}
\end{figure}

 Suppose that we choose the bigon labeled by  1 in the left hand side picture of  Figure 1.4.
Then, for the next step of the algorithm, we have three choices of bigon regions to resolve, labeled by 1 and 2 and 3 of the figure.

\begin{figure}[ht]
\includegraphics[scale=.3]{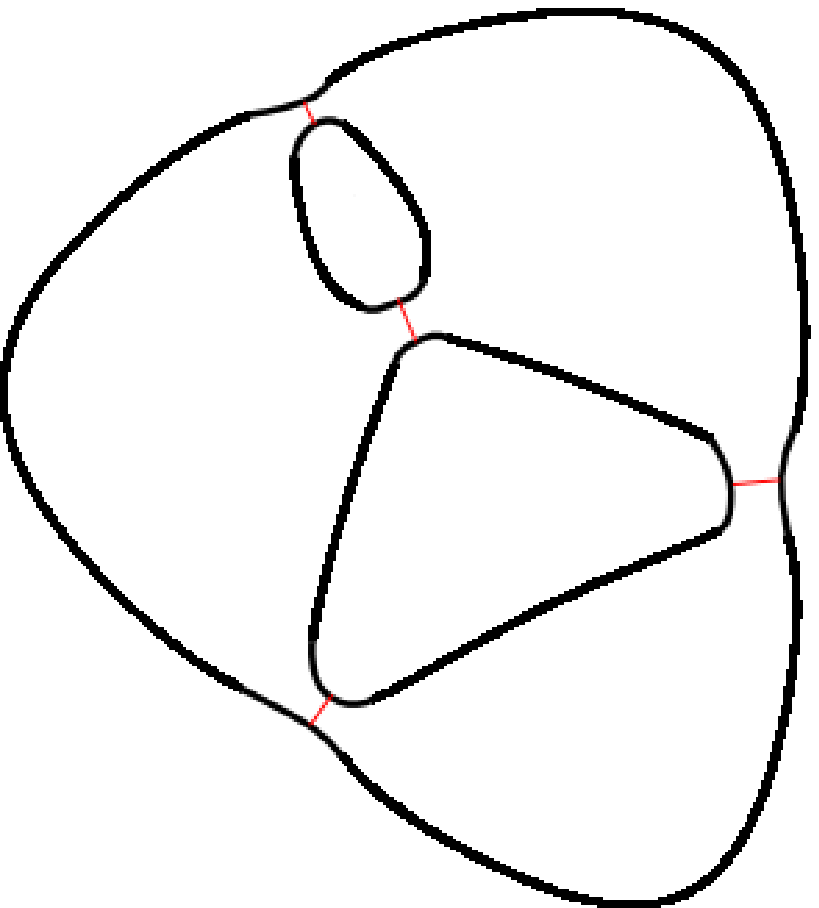} 
\hspace{2cm} 
\includegraphics[scale=.3]{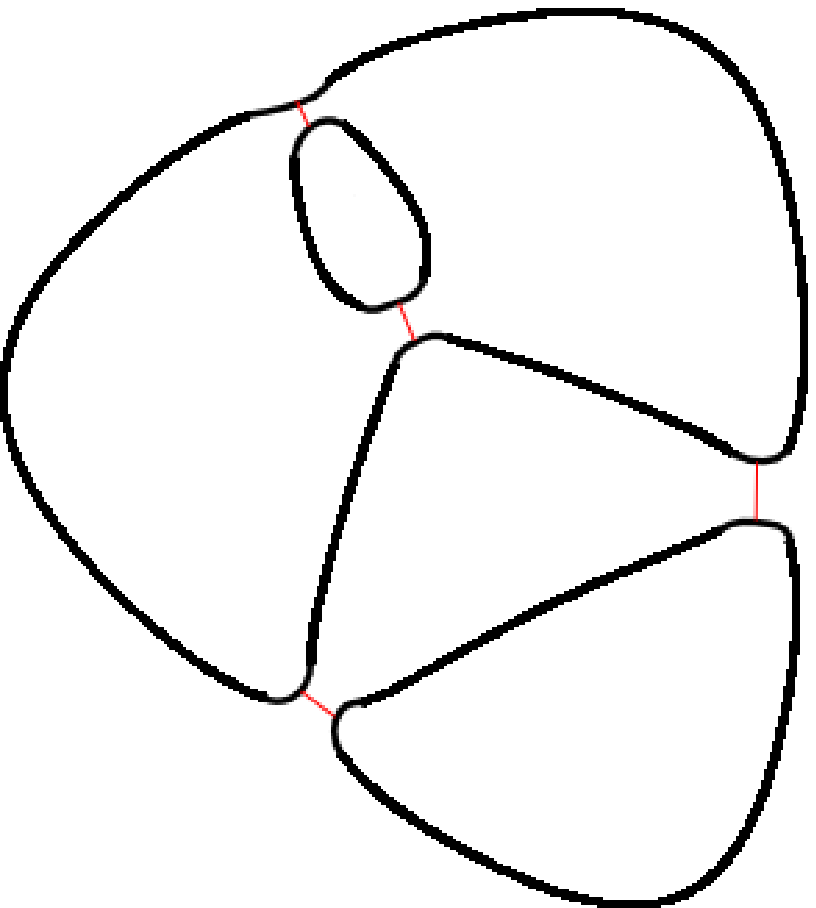}
\caption{Two algorithm branches corresponding to different bigons.}
\label{fstep2}
\end{figure}

The choice of bigon 1 leads 
to a non-orientable surface, shown in the left panel  of Figure \ref{fstep2}, realizing  the crosscap number of $4_1$, which is two.
The choice of bigon 2 leads to an orientable surface, shown in the right panel  of Figure \ref{fstep2}, realizing the genus of the knot which is one.
Both surfaces realize the maximal Euler characteristic of -1.}}
 \end{example}

%%%%%%%%%%%%%%%%%%%%%%%%%%%%%%%%%%%%%%%%%%%%%%%%%%%%%%%%%%%%%%%%%
\section{Jones polynomial and state graphs}\label{Sec:Jones}

A connected link diagram $D$ defines a  4--valent planar graph
 $\Gamma \subset S^2$, 
which leads to the construction of the \emph{Turaev surface} $F(D)$ as follows \cite{dasbach-futer...}:
  Thicken the projection plane to $S^2 \times
[- 1, 1]$, so that $\Gamma$ lies in $S^2 \times \{0\}$. Outside a
neighborhood of the vertices (crossings) the  surface intersect
$S^2 \times
[- 1, 1]$, in $\Gamma \times [- 1, 1]$. In the neighborhood of
each vertex, we insert a saddle, positioned so that the boundary
circles on $S^2 \times \{1\}$ are the
components
of the $A$--resolution $s_A(D)$, and the boundary circles on $S^2
\times \{- 1\}$ are the components of $s_B(D)$.

When $D$ is an alternating diagram, each circle of
$s_A(D)$ or $s_B(D)$ follows the boundary of a region in the
projection plane. Thus, for alternating diagrams, the surface $F(D)$
is  the projection sphere $S^2$. For general diagrams, the  diagram $D$  still is alternating on 
$F(D)$.

The surface $F(D)$ has a natural
cellulation: the $1$--skeleton is the graph $\Gamma$ and the 
$2$--cells correspond to circles of $s_A(D)$ or $s_B(D)$, hence to
vertices of $\GA$ or $\GB$. These $2$--cells admit a 
checkerboard coloring, in which the regions corresponding to the
vertices of $\GA$ are white and the regions corresponding to $\GB$ are
shaded. The graph $\GA$ (resp. $\GB$) can
 be embedded in $F(D)$ as the
adjacency graph of white (resp. shaded) regions.
The \emph{faces} of $\GA$ (that is, regions in the complement of $\GA$) correspond to vertices of $\GB$, and vice versa. Hence the graphs are dual to one another on $F(D)$.
Graphs, together with such embeddings into an orientable surface,  called  \emph{ribbon graphs} have been studied in the literature  \cite{BR}. Building on this point of view,
 Dasbach, Futer, Kalfagianni, Lin and Stoltzfus
\cite{dasbach-futer...} showed that the ribbon graph embedding of $\GA$ into the Turaev surface $F(D)$ carries at least as much information as the Jones polynomial $J_K(t)$.
To state the relevant result from \cite{dasbach-futer...},
recall that
  a \emph{spanning} subgraph of $\GA$ is a subgraph that contains all
  the vertices of $\GA$.  Given a spanning subgraph $\G$ of $\GA$ we
  will use $v(\G)$, $e(\G)$ and $f(\G)$ to denote the number of
  vertices, edges and faces of $\G$ respectively.

\begin{theorem}\cite{dasbach-futer...}\label{JPgraph} For  a connected link diagram $D$, the Kauffman bracket $\langle D \rangle\in
\ZZ[A, A^{-1}]$ is expressed as
$$\langle D \rangle = \sum_{\G\subset 
  \GA}^{\phantom{a}}\ A^{e(\GA)-2e(\G)} (-A^2-A^{-2})^{f(\G)-1},$$
where $\G$ ranges over all the spanning subgraphs of $\GA$.
\end{theorem}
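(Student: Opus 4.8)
The plan is to start from Kauffman's original state-sum expansion of the bracket and to reorganize it according to the ribbon-graph structure recalled above. Writing $a(\sigma)$ and $b(\sigma)$ for the numbers of $A$- and $B$-resolutions performed by a Kauffman state $\sigma$ of $D$, and $|\sigma|$ for the number of state circles of $\sigma$, one has
$$\langle D\rangle \;=\; \sum_{\sigma} A^{\,a(\sigma)-b(\sigma)}\,(-A^2-A^{-2})^{\,|\sigma|-1},$$
the sum being over all $2^{e(\GA)}$ states of $D$. The first step is the evident bijection between states and spanning subgraphs of $\GA$: to $\sigma$ assign the spanning subgraph $\G=\G(\sigma)$ whose edges are exactly those edges of $\GA$ at whose crossing $\sigma$ chooses the $B$-resolution. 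Since $e(\GA)$ is the number of crossings of $D$, this gives $e(\G)=b(\sigma)$ and $a(\sigma)=e(\GA)-e(\G)$, hence $a(\sigma)-b(\sigma)=e(\GA)-2e(\G)$; this already produces the power of $A$ appearing in the claimed formula. Note that the all-$A$ state corresponds to the edgeless spanning subgraph and the all-$B$ state to $\GA$ itself.

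The crux is the loop-counting identity $|\sigma|=f(\G(\sigma))$, where the faces of $\G\subset\GA$ are counted with respect to the ribbon structure that $\G$ inherits from the embedding $\GA\subset F(D)$; equivalently, $f(\G)$ is the number of boundary components of a regular neighborhood $N(\G)$ of $\G$ in the Turaev surface $F(D)$. To see this, thicken each vertex of $\G$ to the white $2$-cell of $F(D)$ containing it, and each edge of $\G$ to a band running through the corresponding saddle, so that $N(\G)$ is the union of these white disks and bands. Tracing $\partial N(\G)$ on $F(D)$: along a white disk it follows the boundary circle of that disk — which is the corresponding circle of $s_A(D)$ — except that at each crossing belonging to $\G$ it is rerouted through the band, where an inspection of the local picture at the saddle shows that the two sides of the band realize precisely the two arcs of the $B$-resolution of that crossing. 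Therefore $\partial N(\G)$ is isotopic on $F(D)$ to the system of circles obtained by performing the $B$-resolution at the crossings of $\G$ and the $A$-resolution at all others, i.e. exactly the state circles of $\sigma$; hence $|\sigma|$ equals the number of components of $\partial N(\G)$, which is $f(\G)$. Alternatively one may induct on $e(\G)$: adding a single edge to $\G$ changes $f(\G)$ by $\pm1$, changing a single resolution changes $|\sigma|$ by $\pm1$, and the ribbon structure of $\GA$ is arranged exactly so that these two changes always have the same sign; the base case, where $\G$ has no edges, is the all-$A$ state, for which both $|\sigma|$ and $f(\G)$ equal $v(\GA)$.

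Substituting $a(\sigma)-b(\sigma)=e(\GA)-2e(\G)$ and $|\sigma|-1=f(\G)-1$ into the state sum and reindexing by spanning subgraphs of $\GA$ instead of by states gives
$$\langle D\rangle \;=\; \sum_{\G\subset\GA} A^{\,e(\GA)-2e(\G)}\,(-A^2-A^{-2})^{\,f(\G)-1},$$
as desired. I expect the only real obstacle to be the identity $|\sigma|=f(\G)$; everything surrounding it is bookkeeping, but this is the point at which the geometry of the Turaev surface genuinely intervenes, and where one must carefully match the ribbon structure of $\GA$ against the way the strands of $D$ are joined at each crossing. It is also precisely here that the structure recorded just before the theorem — the cellulation of $F(D)$, its checkerboard coloring, and the duality of $\GA$ and $\GB$ on $F(D)$ — is doing the work.
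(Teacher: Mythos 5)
Your proposal is correct, and it is essentially the standard argument for this result; note that the survey itself states the theorem with a citation and gives no proof. Your two key steps --- reindexing Kauffman's state sum by spanning subgraphs of $\GA$ (edges recording the $B$-resolved crossings, giving the exponent $e(\GA)-2e(\G)$) and the identity $|\sigma|=f(\G)$ proved by tracing the boundary of a ribbon neighborhood of $\G$ on the Turaev surface --- are exactly the content of the original proof in the cited paper of Dasbach--Futer--Kalfagianni--Lin--Stoltzfus, except that there the bookkeeping is packaged by exhibiting $\langle D\rangle$ as an evaluation of the Bollob\'as--Riordan ribbon graph polynomial of $\GA$, whereas you work with the state sum directly; this is a mild streamlining rather than a genuinely different route, and your direct boundary-tracing verification of $|\sigma|=f(\G)$ (with the induction on $e(\G)$ as a backup) is sound.
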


Given a diagram $D=D(K)$, the {\emph{Jones polynomial}} of $K$, denoted by $J_K(t)$, is
obtained from $\langle D \rangle$  as follows: Multiply $\langle D
\rangle$ by  $(-A)^{-3w(D)}$, where $w(D)$ is the \emph{writhe} of
$D$, and then substitute $A = t^{-1/4}$ \cite{states, Lickbook}.

 Theorem \ref{JPgraph}  leads to formulae for the coefficients of $J_K(t)$
 in terms of topological quantities
of  the state graphs $\GA$ , $\GB$ corresponding to any diagram of $K$
\cite{dasbach-futer..., dfkls:determinant}. 
These formulae become
particularly effective if $\GA, \GB$ contain no 1-edges loops. In particular, this is the case when $\GA, \GB$ correspond to an alternating diagram that is \emph{reduced} (i.e. contains no redundant crossings).

\begin{corollary} \cite{dfkls:determinant} \label{coeffs} Let $D(K)$ be a reduced alternating  diagram and let  $\beta_K$ and $\beta'_K$ denote the second and penultimate  coefficient of $J_K(t)$, respectively . Let $\GRA$ and $\GRB$ denote the \emph{simple} graphs obtained by removing all duplicate edges between pairs of vertices of 
$\G_A(D)$ and $\G_B(D)$.  Then,
$$\abs{\beta_K}=1-\chi(\GRB), \ \ {\rm and} \ \  \abs{\beta'_K}=1-\chi(\GRA).$$
\end{corollary}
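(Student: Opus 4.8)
\section*{Proof proposal for Corollary~\ref{coeffs}}

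The plan is to read the two next-to-extreme coefficients of the Kauffman bracket directly off the spanning-subgraph expansion of Theorem~\ref{JPgraph}, and then transport the answer to $J_K(t)$ through the standard normalization. Write $v=v(\GA)$ and $e=e(\GA)$, and recall that $D$ is connected, so $\GA$ (hence $\GRA$) is connected. Since $D$ is alternating, the Turaev surface $F(D)$ is the sphere $S^2$, so $\GA$ and every spanning subgraph $\G\subseteq\GA$ is a \emph{planar} ribbon graph; thus Euler's formula gives $v-e(\G)+f(\G)=2k(\G)$, where $k(\G)$ is the number of connected components of $\G$. Substituting $f(\G)=2k(\G)-v+e(\G)$ into the summand $A^{\,e-2e(\G)}(-A^2-A^{-2})^{f(\G)-1}$ of Theorem~\ref{JPgraph}, one finds that its top-degree monomial in $A$ is $(-1)^{f(\G)-1}A^{\,M+4(k(\G)-v)}$ with $M:=e+2v-2$; in particular every power of $A$ occurring in $\langle D\rangle$ is $\equiv M \pmod 4$. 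Since $1\le k(\G)\le v$, and $k(\G)=v$ forces $\G$ to be edgeless — here one uses that a reduced alternating diagram has $\GA$ with no $1$-edge loops, so a loop-only subgraph cannot also be of top degree — the leading term of $\langle D\rangle$ is $(-1)^{v-1}A^{M}$, coming solely from the edgeless subgraph. This recovers the classical fact that an extreme coefficient of $J_K$ is $\pm1$.

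The next possible degree is $A^{M-4}$, and exactly two families of spanning subgraphs reach it. The first is the edgeless subgraph, for which $f=v$; expanding $(-A^2-A^{-2})^{v-1}$, its $j=1$ term contributes the monomial $(-1)^{v-1}(v-1)A^{M-4}$. The second is the family of $\G$ with $k(\G)=v-1$: since $\G$ has no loops, such a $\G$ consists of exactly one nontrivial component, supported on a pair $\{u,w\}$ of vertices adjacent in $\GA$ and carrying a nonempty subset of the $m_{uw}$ edges of $\GA$ between $u$ and $w$, together with $v-2$ isolated vertices. If this component carries $s\ge1$ edges then $f(\G)=s+v-2$, so the top-degree monomial of $\G$'s summand is $(-1)^{s}(-1)^{v-1}A^{M-4}$. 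Summing over all adjacent pairs and all nonempty edge-subsets, and using $\sum_{s=1}^{m}\binom{m}{s}(-1)^{s}=-1$, this family contributes $(-1)^{v}$ times the number of adjacent pairs of $\GA$, i.e.\ $(-1)^{v}e(\GRA)$. Adding the two contributions, the coefficient of $A^{M-4}$ in $\langle D\rangle$ is $(-1)^{v-1}\bigl((v-1)-e(\GRA)\bigr)=(-1)^{v}\bigl(1-\chi(\GRA)\bigr)$, using $v(\GRA)=v$ and $\chi(\GRA)=v(\GRA)-e(\GRA)$.

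It remains to locate this coefficient inside $J_K(t)$. Multiplying $\langle D\rangle$ by $(-A)^{-3w(D)}$ and substituting $A=t^{-1/4}$ changes no absolute value of a coefficient and only reverses the order of the exponents; since the surviving powers of $A$ differ by multiples of $4$, the coefficient of $A^{M-4}$ becomes, up to sign, one of the two coefficients of $J_K(t)$ neighbouring its extreme terms, of absolute value $1-\chi(\GRA)$ (nonnegative because $\GRA$ is connected). For the other extreme term one uses the symmetric expansion over $\GB$: applying Theorem~\ref{JPgraph} to the mirror diagram $\bar D$ (which interchanges $\GA$ and $\GB$ and replaces $A$ by $A^{-1}$) gives $\langle D\rangle=\sum_{\H\subseteq\GB}A^{-e(\GB)+2e(\H)}(-A^2-A^{-2})^{f(\H)-1}$, and the identical argument shows the coefficient of $J_K(t)$ neighbouring the opposite extreme has absolute value $1-\chi(\GRB)$. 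Matching these two with $\beta'_K$ and $\beta_K$ according to the convention of \cite{dfkls:determinant} finishes the proof.

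The substantive work is concentrated in the second paragraph, and that is also where slips are easiest: one must verify that no spanning subgraph other than the edgeless one and those with $k(\G)=v-1$ can reach $A$-degree $M-4$, classify the $k(\G)=v-1$ subgraphs correctly in the absence of $1$-edge loops, and track the signs $(-1)^{f(\G)-1}$ faithfully through the alternating binomial sum. Everything else is formal manipulation of Theorem~\ref{JPgraph} together with the planarity of the Turaev surface of an alternating diagram.
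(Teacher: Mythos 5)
Your computation is correct and follows essentially the same route as the source the paper relies on: the paper gives no proof of Corollary~\ref{coeffs} beyond citing \cite{dfkls:determinant}, and the cited derivation is exactly this extraction of the two next-to-extreme coefficients of $\langle D\rangle$ from the spanning-subgraph expansion of Theorem~\ref{JPgraph}, using genus zero of the Turaev surface, the absence of loops in $\GA$, $\GB$ for reduced alternating diagrams, and the alternating binomial sum that collapses parallel edges to edges of $\GRA$, $\GRB$. The only soft spot, which you flag yourself, is the final matching of the two ends with $\beta_K$ versus $\beta'_K$, which is a convention check rather than a gap.
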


%%%%%%%%%%%%%%%%%%%%%%%%%%%%%%%%%%%

\section{Geometric Connections}\label{Sec:Geometry}
%\subsection{States, Graphs and surfaces}
To a link  $K$ in $S^3$  corresponds a  compact 3-manifold with boundary; namely $M_K=S^3\setminus N(K)$, where $N(K)$ is an open tube around $K$.
The interior of $M_K$ is homeomorphic to the link complement $S^3\setminus K$.
In the 80's,  Thurston \cite{thurston:notes} proved that link complements decompose canonically  into pieces that admit locally homogeneous geometric structures. 
A very common and interesting  case   is when the entire $S^3\setminus K$ has a hyperbolic structure, that is a metric of constant curvature $-1$ of finite volume.
 By Mostow rigidity, this hyperbolic structure is unique up to isometry, hence invariants of the  metric of $S^3\setminus K$   give topological invariants of $K$.
 
State surfaces obtained from link diagrams $D(K)$ give rise to properly embedded  surfaces in $M_K$. 
Many geometric properties of state surfaces can be checked through combinatorial and link diagrammatic criteria. 
For instance, Ozawa \cite{ozawa} showed that 
the all -$A$ surface $S_A(D)$ is  $\pi_1$--injective in $M_K$ if the state graph $\GA(D)$ contains no 1-edge loops. Futer, Kalfagianni and Purcell  \cite{GutsBook} gave a different proof of Ozawa's result
and also 
showed that $M_K$ is a fiber bundle  over the circle with fiber 
$S_A(D)$, if and only if the simple state graph $\GRA(D)$ is a \emph{tree}.

State 
surfaces have been used  to obtain relations between  combinatorial or Jones type link invariants and geometric invariants
of link complements.
Below we give a couple of  sample of such relations. For additional applications  the reader is referred to
 to \cite{AdamsCuspSizeBounds, BuKa, GutsBook, fkp:TAMS, lackenby:alt-volume, LPalte} and references therein.
The first result, proven combining  \cite{Adamsstate} with hyperbolic geometry  techniques, relates the crosscap number and  the Jones polynomial of alternating links.
 It was used to determine the crosscap numbers of 283 alternating knots of \emph{knot tables} that were  previously unknown \cite{Knotinfo}. 

\begin{theorem}\label{thm:cupjonesknots}\cite{KaLee} Given an an alternating, non-torus knot $K$,  with crosscap number $C(K)$,
 we have

$$ \left\lceil \frac{ T_K}{3}\right\rceil  + 1\; \leq \; C(K) \; \leq \;  {\rm {min}}{ \left\{ T_K + 1, \
\left\lfloor{ \frac{s_K}{2}}\,\right\rfloor \right\}}
$$
\noindent where $T_K:= \abs{\beta_K} + \abs{\beta'_K},$
  $\beta_K$,
$\beta'_K$ are
second and penultimate coefficients of  $J_K(t)$ and
 $s_K$  is the degree span of $J_K(t)$.
Furthermore, both bounds are sharp.
 \end{theorem}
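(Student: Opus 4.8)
The plan is to derive both inequalities by combining the Adams--Kindred algorithm (Theorem \ref{AK}) with the Jones polynomial formulae of Corollary \ref{coeffs}, and then to bring in hyperbolic geometry to sharpen the lower bound and to handle the sharpness claim. Since $K$ is a knot we have $k=1$, so Theorem \ref{AK} gives $C(K) = 2-\chi(S)$ when a maximal-$\chi$ state surface $S$ is non-orientable, and $C(K)=3-\chi(S)$ otherwise; in either case the crosscap number is controlled by $\chi(S)$, which by the remark preceding Lemma \ref{triangle} equals $v_\sigma - c$ for the corresponding state.

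For the upper bound, first I would get $C(K)\le T_K+1$. The key point is that the all-$A$ and all-$B$ state surfaces $S_A(D)$ and $S_B(D)$ of a reduced alternating diagram are candidate spanning surfaces, and by Corollary \ref{coeffs} their (simplified) state graphs satisfy $|\beta'_K| = 1-\chi(\GRA)$ and $|\beta_K| = 1-\chi(\GRB)$; since duplicate edges in $\G_A$ or $\G_B$ correspond to nugatory-type twist regions one can pass between $\chi(S_A)$, $\chi(\GRA)$ and the crosscap number of the associated surface, so the better of $S_A, S_B$ yields a spanning surface whose crosscap number is at most $\min\{|\beta_K|,|\beta'_K|\}+1 \le T_K+1$. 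The second half of the $\min$, namely $C(K)\le \lfloor s_K/2\rfloor$, comes from the classical fact that the span $s_K$ of $J_K$ equals the crossing number $c$ of the reduced alternating diagram (Kauffman--Murasugi--Thistlethwaite), combined with the observation that in any branch of the Adams--Kindred algorithm the resulting state has at least $c/2 + 1$ state circles when $c\ge 2$, so $\chi(S)\ge c/2+1-c = 1-c/2$ and hence $C(K) \le 3-\chi(S) \le 2 + c/2$, which after accounting for orientability and the non-torus hypothesis tightens to $\lfloor c/2\rfloor = \lfloor s_K/2 \rfloor$.

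For the lower bound $\lceil T_K/3\rceil + 1 \le C(K)$, I would run the Adams--Kindred algorithm and track how much the Euler characteristic can drop at each step relative to the edges removed from $\GRA$ and $\GRB$. Resolving a triangle region in the algorithm changes the state graphs in a controlled local way, and each such move can decrease $1-\chi(\GRA)$ and $1-\chi(\GRB)$ by a bounded amount while decreasing $-\chi(S)$ by one less crossing's worth; amortizing over the algorithm, the total quantity $|\beta_K|+|\beta'_K| = T_K$ can exceed roughly $3$ times the loss in $\chi$, which translates into $C(K) \ge T_K/3 + 1$. Making the ``factor of $3$'' bookkeeping precise — essentially showing each unit drop in $C(K)$ absorbs at most three units of $T_K$ across the triangle and bigon branches — is the main obstacle; this is where one must look carefully at how faces and edges of the simplified state graphs behave under the two branches of step (3) of the algorithm, using that the diagram is reduced and alternating so that no 1-edge loops appear.

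Finally, for sharpness of both bounds, I would exhibit explicit alternating knot families: one family (e.g. suitable pretzel or two-bridge knots) for which the lower bound is attained, detected by computing $\beta_K, \beta'_K$ and exhibiting a spanning surface of matching crosscap number, and another family realizing each value of the upper bound $\min\{T_K+1, \lfloor s_K/2\rfloor\}$. The non-torus hypothesis is needed precisely to exclude the $(2,n)$-torus knots, where $J_K$ is sparse and the crosscap number formula $C(K)=2g(K)+1$ of Theorem \ref{AK}(2) would otherwise skew the estimates; so part of the argument is to check that for non-torus alternating knots the ``all surfaces orientable'' case of Theorem \ref{AK} still yields the stated bounds.
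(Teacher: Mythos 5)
This theorem is not proved in the survey; it is quoted from \cite{KaLee}, where the proof combines the Adams--Kindred algorithm with the Dasbach--Lin interpretation of $T_K$ and some hyperbolic-geometry input. The essential bridge, which your proposal never isolates, is that for a twist-reduced alternating diagram $D$ one has $T_K=\abs{\beta_K}+\abs{\beta'_K}=t(D)$, the number of twist regions: this follows from Corollary \ref{coeffs} because $e(\GRA)+e(\GRB)=c+t(D)$ and $v(\GA)+v(\GB)=c+2$. Both bounds are really statements about $t(D)$. Your mechanism for $C(K)\le T_K+1$ does not work as stated: the crosscap number of $S_A$ is $1-\chi(S_A)=1-v(\GA)+c$, which is governed by the full graph $\GA$ with all its parallel edges, not by $\GRA$. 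Duplicate edges are not ``nugatory''; they are genuine twist regions, and $c-e(\GRA)$ is unbounded (take a twist knot with one long twist region: $T_K$ stays small while $1-\chi(S_A)$ grows), so $S_A$, $S_B$ cannot give a bound of the form $\min\{\abs{\beta_K},\abs{\beta'_K}\}+1$. The correct surface is the state adapted to twist regions --- resolve each twist region so that its bigons become state circles --- which has at least $c-t(D)+1$ state circles, hence $\chi\ge 1-t(D)$ and $C(K)\le t(D)+1=T_K+1$. Similarly, your route to $\lfloor s_K/2\rfloor$ is unjustified (nothing guarantees every branch of the algorithm produces $c/2+1$ circles once triangle moves are forced) and even granting it you land at $c/2+2$, not $\lfloor c/2\rfloor$; the clean argument is Murakami--Yasuhara's bound $C(K)\le\lfloor c(K)/2\rfloor$ together with $s_K=c(K)$ for reduced alternating diagrams.

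The more serious problem is the lower bound, which is the heart of the theorem and which you explicitly leave open (``the factor of $3$ bookkeeping ... is the main obstacle''). No mechanism is offered for why each unit of crosscap number absorbs at most three units of $T_K$, and tracking $\chi(\GRA)$, $\chi(\GRB)$ under algorithm moves is not the right frame: what is needed is an upper bound on the number of state circles (equivalently on $\chi$ of the maximal Euler characteristic surface guaranteed by Theorem \ref{AK}) of the form $v_\sigma\le c-t(D)/3$, proved by analyzing how state circles and algorithm branches interact with twist regions of a twist-reduced diagram. This is exactly where the non-torus hypothesis is indispensable --- for $(2,n)$ torus knots $C(K)=1$ while $t(D)=1$ would force $C(K)\ge 2$, so the claimed inequality genuinely fails there, and your explanation of the hypothesis (that Theorem \ref{AK}(2) ``skews the estimates'') misses this; in \cite{KaLee} the hyperbolicity of prime non-torus alternating knots is part of what rules out the degenerate cases. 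Finally, sharpness requires exhibiting explicit alternating families attaining each bound, which you only gesture at. As it stands the proposal establishes neither inequality.
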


\begin{example} {\rm{For $K=4_1$ we have $J_K(t)=t^{-2}-t^{-1}+ 1-t+ t^2$. Thus $T_K=1$ and $s_k=4$ and Theorem \ref{thm:cupjonesknots} gives $C(K)=2$.}}
\end{example}
The next result gives a strong connection of the Jones polynomial
to hyperbolic geometry as it estimates volume of hyperbolic alternating links in terms of coefficients of their Jones polynomials.
The result follows by work of Dasbach and Lin \cite{dfkls:determinant} and work
of Lackenby 
\cite{lackenby:alt-volume}.

\begin{theorem} \label{volume}Let $K$ be an alternating link  whose exterior admits a hyperbolic structure with volume  $\vol(S^3 \setminus K)$.
 Then we have
\begin{equation*}
\frac{\voct}{2} (T_K-2) \leq \vol(S^3 \setminus K) \leq 10\vtet (T_K - 1),
\end{equation*}
\noindent  where  $\voct=3.6638$ and $\vtet= 1.0149$.
\end{theorem}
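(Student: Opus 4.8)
The plan is to reduce the statement to Lackenby's volume estimates for hyperbolic alternating links in terms of the \emph{twist number} of a diagram, and then to identify that twist number with the Jones quantity $T_K$ by way of Corollary \ref{coeffs}. First I would fix a connected, prime, reduced alternating diagram $D=D(K)$: such a diagram exists because a hyperbolic link is non-split and prime, because a prime alternating link admits a prime alternating diagram by the theorem of Menasco, and because nugatory crossings may be deleted without affecting $K$ or the alternating property. Let $t(D)$ denote the twist number of $D$, i.e.\ the number of twist regions (maximal strings of bigons, together with the crossings that lie in no bigon). By the Tait flyping theorem of Menasco and Thurston, $t(D)$ is independent of the chosen reduced alternating diagram, so it is an invariant of $K$, which I will denote $t(K)$.

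Next I would invoke the work of Dasbach and Lin \cite{dfkls:determinant} to identify $t(K)$ with $T_K$. A count on the state graphs of $D$ shows that in passing from $\GA$ to its simplification $\GRA$ one collapses precisely the bundles of parallel edges produced by the crossings within the twist regions, and similarly for $\GB$ and $\GRB$; tallying the contribution of each twist region to $\GRA$ and to $\GRB$ then gives
$$\bigl(1-\chi(\GRA)\bigr)+\bigl(1-\chi(\GRB)\bigr)=t(K).$$
On the other hand, Corollary \ref{coeffs} says $\abs{\beta_K}=1-\chi(\GRB)$ and $\abs{\beta'_K}=1-\chi(\GRA)$, so the left-hand side of the displayed equation is exactly $T_K=\abs{\beta_K}+\abs{\beta'_K}$. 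Hence $t(K)=T_K$.

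Finally I would apply Lackenby's theorem \cite{lackenby:alt-volume}, together with the appendix of Agol and D.~Thurston: for a prime alternating diagram $D$ of a hyperbolic link $K$,
$$\frac{\voct}{2}\,\bigl(t(D)-2\bigr)\ \le\ \vol(S^3\setminus K)\ \le\ 10\,\vtet\,\bigl(t(D)-1\bigr),$$
where $\voct$ and $\vtet$ are the volumes of the regular ideal octahedron and tetrahedron. Substituting $t(D)=t(K)=T_K$ from the previous step yields the asserted inequalities. The one point that needs genuine work is the combinatorial identity $t(K)=T_K$ of the middle step; but that is precisely the content of \cite{dfkls:determinant} (via Corollary \ref{coeffs}), and the volume bounds are precisely those of \cite{lackenby:alt-volume}, so the remaining task is only to check that the hypotheses line up: hyperbolicity of $K$ must supply a connected, prime, reduced alternating diagram, and it excludes the $(2,n)$-torus links, which are the only prime alternating links that fail to be hyperbolic and for which $t(D)\le 1$ would make the estimates degenerate.
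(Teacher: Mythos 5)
Your argument is correct and, at the level this survey operates, it is essentially the proof the paper intends: Theorem \ref{volume} is attributed precisely to the combination of Dasbach--Lin \cite{dfkls:determinant} and Lackenby \cite{lackenby:alt-volume} (with the Agol--D.~Thurston appendix supplying the constant $10\vtet$), which is exactly what you assemble. The one genuine difference is in how the lower bound is unpacked: the paper's sketch never mentions the twist number, but instead cuts $M_K$ along the state (checkerboard) surfaces $S_A$, $S_B$ of a reduced alternating diagram, invokes Agol--Storm--Thurston \cite{AST:guts} to bound $\vol(S^3\setminus K)$ from below in terms of $\abs{\chi(\guts(M_K,S_A))}$, and uses Lackenby's identification $\abs{\chi(\guts(M_K,S_A))}=1-\chi(\GRA)$ together with Corollary \ref{coeffs} to convert this directly into $\abs{\beta'_K}$ (with $S_B$ giving $\abs{\beta_K}$); you instead quote Lackenby's packaged bounds in terms of $t(D)$ and feed in the Dasbach--Lin identity $t(D)=T_K$. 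Both routes are faithful to the literature; the guts formulation is the one that generalizes beyond alternating links \cite{GutsBook}, while yours is shorter if one is willing to cite Lackenby's theorem as a black box. One small correction to your middle step: the twist number of a merely reduced alternating diagram is not a flype invariant --- a flype can merge two twist regions --- so the Tait flyping theorem alone does not make $t(D)$ well defined, and the equality $t(D)=\abs{\beta_K}+\abs{\beta'_K}$ (equivalently, your count of parallel-edge classes, which tacitly assumes distinct twist regions never produce parallel edges in $\GRA$ or $\GRB$) requires a twist-reduced diagram; in general one only gets $T_K\le t(D)$. This does not damage the argument: apply Lackenby's bounds to a twist-reduced prime alternating diagram, which exists by flyping and for which $t(D)=T_K$ holds (indeed, for the lower bound the inequality $T_K\le t(D)$ would already suffice).
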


To establish the lower bound  of Theorem \ref{volume} one looks at the state surfaces $S_A$, $S_B$ corresponding to a reduced alternating diagram $D(K)$:
Use $M_K \cut S_A$ to denote the complement in $M_K$ of a collar neighborhood of $S_A$. 
Jaco-Shalen-Johannson  theory \cite{jaco-shalen}   implies that there is  a canonical way to decompose $M_K \cut S_A$  along certain annuli
 into three types of pieces: (i) $I$--bundles over subsurfaces of $S_A$;  (ii) solid tori; and  (iii) the
 remaining pieces,  denoted by $\guts(M,S)$. On one hand,  by work Agol, Storm, and Thurston  \cite{AST:guts}, the quantity 
 $\abs{ \chi(\guts(M_K,S_A))}$ gives a lower bound 
 for the volume $ \vol(S^3 \setminus K)$. On the other hand, \cite{lackenby:alt-volume} shows that this quantity   is equal to  $1-\chi(\GRA)$, which by Corollary \ref{coeffs}
 is $\abs{\beta'_K}$. A similar consideration applies to the surface $S_B$ giving the lower bound of Theorem \ref{volume}. The approach was developed and  generalized to non-alternating links   in \cite{GutsBook}.

%\acknowledgement{The author is supported in part by NSF grants DMS-1404754 and DMS-1708249 }

\bibliographystyle{plain} \bibliography{biblio}
\end{document}